\documentclass[12pt]{article}
\usepackage{amsmath,amssymb,amsthm,comment,cite}

\newtheorem{theorem}{Theorem}[section]
\newtheorem{lemma}[theorem]{Lemma}
\newtheorem{corollary}[theorem]{Corollary}
\newtheorem{conjecture}[theorem]{Conjecture}
\newtheorem{proposition}[theorem]{Proposition}

\def\barr{\begin{array}}
\def\earr{\end{array}}

\title{On a group-theoretical generalization of the Gauss formula}
\author{Georgiana Fasol\u a and Marius T\u arn\u auceanu}
\date{August 13, 2022}

\begin{document}

\maketitle

\begin{abstract}

In this paper, we discuss a group-theoretical generalization of the well-known Gauss formula involving the function
that counts the number of automorphisms of a finite group. This gives several characterizations of finite cyclic groups.
\end{abstract}
\smallskip

{\small
\noindent
{\bf MSC 2020\,:} Primary 20D60, 11A25; Secondary 20D99, 11A99.

\noindent
{\bf Key words\,:} Gauss formula, Euler's totient function, automorphism group, finite group, cyclic group, abelian group.}

\section{Introduction}

The \textit{Euler's totient function} (or, simply, the \textit{totient function}) $\varphi$ is one of the most famous functions in number theory. The totient $\varphi(n)$ of a positive integer $n$ is defined to be the number of positive integers less than or equal to $n$ that are coprime to $n$. In algebra this function is important mainly because it gives the order of the group of units in the ring $(\mathbb{Z}_n,+,\cdot)$. Also, $\varphi(n)$ can be seen as the number of generators or as the number of automorphisms of the cyclic group $(\mathbb{Z}_n,+)$. Note that there exist a lot of identities involving the totient function. One of them is the \textit{Gauss formula}
\begin{equation}
\sum_{d|n}\varphi(d)=n,\, \forall\, n\in\mathbb{N}^*.
\end{equation}

In the last years there has been a growing interest in extending arithmetical notions to finite groups (see e.g. \cite{1,2,9,10,13,14}). Following this trend, we remark that (1) can be rewritten as

\begin{equation}
\sum_{H\leq\mathbb{Z}_n}|{\rm Aut}(H)|=|\mathbb{Z}_n|,\, \forall\, n\in\mathbb{N}^*.\nonumber
\end{equation}It suggests us to consider the functions

\begin{equation}
S(G)=\sum_{H\leq G}|{\rm Aut}(H)| \mbox{ and } f(G)=\frac{S(G)}{|G|}\nonumber
\end{equation}for any finite group $G$. Thus the classical Gauss formula becomes
\begin{equation}
f(\mathbb{Z}_n)=1,\, \forall\, n\in\mathbb{N}^*.
\end{equation}

The main goal of our paper is to study the above function $f$. We start by observing that it is multiplicative, i.e. if $G_i$, $i=1,2,\dots,m$, are finite groups of coprime orders, then we have
\begin{equation}
f\left(\prod_{i=1}^m G_i\right)=\prod_{i=1}^m f(G_i).\nonumber
\end{equation}This implies that the computation of $f(G)$ for a finite nilpotent group $G$ is reduced to $p$-groups.
\smallskip

Our first theorem shows that the cyclic groups are in fact the unique groups satisfying (2).

\begin{theorem}
Let $G$ be a finite group. Then $f(G)\geq 1$, and we have equality if and only if $G$ is cyclic.
\end{theorem}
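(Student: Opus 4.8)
The plan is to compare the full sum $S(G)$ against a ``cyclic part'' that turns out to equal $|G|$ exactly. The central observation is the identity
\begin{equation}
|G|=\sum_{\substack{C\leq G\\ C\ \mathrm{cyclic}}}|\mathrm{Aut}(C)|. \nonumber
\end{equation}
To prove it I would partition the underlying set of $G$ according to the cyclic subgroup each element generates: every $g\in G$ lies in exactly one class, namely $\{x\in G:\langle x\rangle=\langle g\rangle\}$, and these classes are indexed by the cyclic subgroups $C\leq G$. Since a cyclic group of order $d$ has precisely $\varphi(d)$ generators and $|\mathrm{Aut}(\mathbb{Z}_d)|=\varphi(d)$, the class attached to a cyclic subgroup $C$ has exactly $|\mathrm{Aut}(C)|$ elements. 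Summing over all classes yields the displayed identity; note that this is really just the Gauss formula $(1)$ read inside $G$.

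Granting the identity, the inequality is immediate. Splitting the defining sum for $S(G)$ into cyclic and non-cyclic subgroups gives
\begin{equation}
S(G)=\sum_{\substack{C\leq G\\ C\ \mathrm{cyclic}}}|\mathrm{Aut}(C)|+\sum_{\substack{H\leq G\\ H\ \mathrm{not\ cyclic}}}|\mathrm{Aut}(H)|=|G|+\sum_{\substack{H\leq G\\ H\ \mathrm{not\ cyclic}}}|\mathrm{Aut}(H)|. \nonumber
\end{equation}
Every term $|\mathrm{Aut}(H)|$ is at least $1$, so the second sum is nonnegative and hence $f(G)=S(G)/|G|\geq 1$, which settles the inequality.

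For the equality case, the forward direction is free: if $G$ is cyclic this is exactly formula $(2)$, already recorded in the excerpt. For the converse, $f(G)=1$ forces the non-cyclic sum above to vanish, i.e.\ $G$ has \emph{no} non-cyclic subgroups at all; but $G$ is a subgroup of itself, so this immediately forces $G$ to be cyclic. There is essentially no obstacle once the key identity is in hand; the only point requiring care is the clean bookkeeping in the partition argument, namely checking that every element is counted exactly once and that ``number of generators of $C$'' is correctly identified with $|\mathrm{Aut}(C)|$. I would therefore expect the write-up to spend its effort establishing the identity rigorously, after which the inequality and the equality characterization each follow in a single line.
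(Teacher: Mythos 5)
Your proposal is correct and follows essentially the same route as the paper: the identity $\sum_{C\ \mathrm{cyclic}}|\mathrm{Aut}(C)|=|G|$ is exactly the paper's computation $\sum_{d}n_d\varphi(d)=\sum_d n'_d=|G|$ (counting elements by the cyclic subgroup they generate), and the equality case is handled identically by noting that the remaining sum over non-cyclic subgroups vanishes only when $G$ itself is cyclic.
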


The above theorem leads to the following natural question:\vspace{1mm}
\begin{center}
\textit{Is there a minimum of $f$\! on the class of finite non-cyclic groups}?\vspace{1mm}
\end{center}In what follows, we will give some partial answers to this question, by finding the constant $c$ for several
particular classes of finite non-cyclic groups.

\begin{proposition}
For any finite non-cyclic group $G$, we have
\begin{equation}
f(G)\geq 1+\frac{1}{|Z(G)|}\,.\nonumber
\end{equation}In particular, if $G$ is centerless, then $f(G)\geq 2$.
\end{proposition}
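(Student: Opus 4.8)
The plan is to reduce the inequality to two well-understood facts: a Gauss-type identity restricted to cyclic subgroups, and the standard lower bound $|\mathrm{Aut}(G)|\geq |G|/|Z(G)|$ coming from inner automorphisms.

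First I would record the key identity
\[
\sum_{\substack{H\leq G\\ H\text{ cyclic}}}|\mathrm{Aut}(H)|=|G|,
\]
valid for \emph{every} finite group $G$. This is the exact generalization of the Gauss formula and is surely the engine behind Theorem 1.1. To prove it, I would partition $G$ according to the cyclic subgroup each element generates: every $g\in G$ is a generator of exactly one cyclic subgroup, namely $\langle g\rangle$, so $G$ is the disjoint union, over cyclic $H\leq G$, of the sets of generators of $H$. Since a cyclic group $H$ of order $d$ has exactly $\varphi(d)=|\mathrm{Aut}(H)|$ generators, counting $|G|$ through this partition yields the identity.

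With this in hand, I would split the defining sum for $S(G)$ into its cyclic and non-cyclic parts, obtaining
\[
f(G)=1+\frac{1}{|G|}\sum_{\substack{H\leq G\\ H\text{ non-cyclic}}}|\mathrm{Aut}(H)|.
\]
If $G$ is non-cyclic, then $G$ itself appears among the non-cyclic subgroups, so the sum is bounded below by the single term $|\mathrm{Aut}(G)|$, giving $f(G)\geq 1+|\mathrm{Aut}(G)|/|G|$.

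Finally I would invoke $\mathrm{Inn}(G)\cong G/Z(G)\leq \mathrm{Aut}(G)$, so that $|\mathrm{Aut}(G)|\geq |G|/|Z(G)|$; substituting gives $f(G)\geq 1+1/|Z(G)|$, and the centerless case $|Z(G)|=1$ is then immediate. I do not expect a serious obstacle: the content is concentrated entirely in the cyclic-subgroup identity, and once that is granted the rest is a one-line estimate. The only point to keep in mind is that discarding all non-cyclic subgroups other than $G$ is precisely what makes the bound natural for groups where $G/Z(G)$ is the dominant contribution, so no finer use of the remaining terms is needed for this statement.
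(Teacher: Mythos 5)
Your proposal is correct and follows essentially the same route as the paper: establish the cyclic-subgroup identity $\sum_{H\in C(G)}|{\rm Aut}(H)|=|G|$ (your generator-partition argument is just a rephrasing of the paper's count $n'_d=n_d\varphi(d)$), split off the non-cyclic part, keep only the term $H=G$, and bound $|{\rm Aut}(G)|$ below by $|{\rm Inn}(G)|=|G|/|Z(G)|$. No meaningful differences.
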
\newpage

Note that Proposition 1.2 implies
\begin{equation}
f(G)\geq 1+\frac{4}{|G|}\,,\nonumber
\end{equation}for any finite non-abelian group $G$. Also, by the proof of Proposition 2.1, we infer that small values of $f(G)$ are obtained for finite non-cyclic groups $G$ with many cyclic subgroups. This leads to the next proposition. We recall that a finite group $G$ is called a \textit{minimal non-cyclic group} if $G$ is not cyclic, but all proper subgroups of $G$ are cyclic.

\begin{proposition}
Let $G$ be a finite minimal non-cyclic group. Then $f(G)\geq 2$, and we have equality if and only if $G\cong\mathbb{Z}_3\rtimes\mathbb{Z}_{2^n}$, $n\in\mathbb{N}^*$.
\end{proposition}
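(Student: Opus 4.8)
The plan is to invoke the classification of finite minimal non-cyclic groups, which splits such a $G$ into three families: the elementary abelian group $\mathbb{Z}_p\times\mathbb{Z}_p$, the quaternion group $Q_8$, and the non-nilpotent groups $G=\mathbb{Z}_q\rtimes\mathbb{Z}_{p^n}$ of order $qp^n$, where $p,q$ are distinct primes with $p\mid q-1$ and $\mathbb{Z}_{p^n}$ acts on $\mathbb{Z}_q$ through an automorphism of order exactly $p$ (a higher action order would create a proper non-abelian, hence non-cyclic, subgroup). The first two families are disposed of by a direct count: for $\mathbb{Z}_p\times\mathbb{Z}_p$ the trivial subgroup, the $p+1$ subgroups of order $p$, and the whole group with $|{\rm Aut}(\mathbb{Z}_p\times\mathbb{Z}_p)|=(p^2-1)(p^2-p)$ give $f>2$ for every $p$ (the minimum being $f=\tfrac{10}{4}$ at $p=2$); for $Q_8$ the six subgroups yield $S(Q_8)=32$ and $f(Q_8)=4>2$. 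So neither can attain the bound.

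The heart of the argument is the third family; write $G=\langle a,b\mid a^q=b^{p^n}=1,\ bab^{-1}=a^r\rangle$, where $r$ has multiplicative order $p$ modulo $q$. First I would determine its subgroup lattice: the Sylow $q$-subgroup $N=\langle a\rangle\cong\mathbb{Z}_q$ is normal and characteristic; the center is $Z(G)=\langle b^p\rangle\cong\mathbb{Z}_{p^{n-1}}$ and sits inside every Sylow $p$-subgroup; there are exactly $q$ Sylow $p$-subgroups, each $\cong\mathbb{Z}_{p^n}$; for $1\le i\le n-1$ there is a unique subgroup $\cong\mathbb{Z}_{p^i}$ and a unique subgroup $\cong\mathbb{Z}_{qp^i}$; and the only non-cyclic subgroup is $G$ itself. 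Summing $|{\rm Aut}(H)|=\varphi(|H|)$ over all proper subgroups $H$, the contributions telescope exactly to $|G|=qp^n$, so that
\[
S(G)=|G|+|{\rm Aut}(G)|,\qquad f(G)=1+\frac{|{\rm Aut}(G)|}{|G|}.
\]

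The key computation is then $|{\rm Aut}(G)|$. Since $N$ is characteristic, any automorphism sends $a\mapsto a^s$ with $s$ a unit modulo $q$ ($q-1$ choices), and it must send $b$ to an element of order $p^n$ conjugating $a$ to $a^r$; writing such an element as $a^jb^k$ and noting that conjugation by it sends $a\mapsto a^{r^k}$, the requirement becomes $k\equiv 1\pmod p$ (which already forces order $p^n$), leaving $q$ choices for $j$ and $p^{n-1}$ choices for $k$. This yields $|{\rm Aut}(G)|=(q-1)\,qp^{n-1}$, whence $f(G)=1+\tfrac{q-1}{p}$. Because $p\mid q-1$, the quotient $\tfrac{q-1}{p}$ is a positive integer, so $f(G)\ge 2$, with equality precisely when $q-1=p$. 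The closing step is the elementary remark that two primes differing by $1$ must be $2$ and $3$, forcing $p=2$, $q=3$, and hence $G\cong\mathbb{Z}_3\rtimes\mathbb{Z}_{2^n}$; conversely every such group gives $f=1+\tfrac{3-1}{2}=2$ for all $n\in\mathbb{N}^*$.

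I expect the main obstacle to be the third-family analysis: accurately enumerating every subgroup (and verifying that the proper-subgroup terms really collapse to $|G|$), and correctly counting the automorphisms, where the congruence $k\equiv 1\pmod p$ encoding ``acts on $N$ exactly as $b$ does'' is the delicate point. The two sporadic families and the final prime-pair argument are routine by comparison.
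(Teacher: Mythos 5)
Your proposal is correct and follows essentially the same route as the paper: invoke the Miller--Moreno classification of minimal non-cyclic groups into the three families $\mathbb{Z}_p\times\mathbb{Z}_p$, $Q_8$, and $\mathbb{Z}_q\rtimes\mathbb{Z}_{p^n}$, and compute $f$ on each, obtaining $1+\frac{(p+1)(p-1)^2}{p}$, $4$, and $1+\frac{q-1}{p}$ respectively, so that equality forces $q-1=p$ and hence $(p,q)=(2,3)$. The only difference is that the paper states these three values without derivation, whereas you supply the subgroup-lattice and automorphism counts explicitly; your details (in particular $|{\rm Aut}(\mathbb{Z}_q\rtimes\mathbb{Z}_{p^n})|=(q-1)qp^{n-1}$ and the collapse of the proper-subgroup sum to $|G|$) check out.
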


The following theorem shows that the constant $c$ can be taken $\frac{5}{2}$ for finite abelian groups.

\begin{theorem}
Let $G$ be a finite non-cyclic abelian group. Then $f(G)\geq\frac{5}{2}$\,, and we have equality if and only if $G\cong\left(\mathbb{Z}_2\times\mathbb{Z}_2\right)\times\mathbb{Z}_n$, where $n$ is an odd positive integer.
\end{theorem}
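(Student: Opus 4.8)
The plan is to reduce everything to $p$-groups via multiplicativity and to run the whole argument through a clean reformulation of $f$. First I would record the identity that makes the problem tractable: for any finite group $G$, summing $|{\rm Aut}(C)|=\varphi(|C|)$ over the cyclic subgroups $C$ counts each element of $G$ exactly once, since a cyclic subgroup of order $d$ has $\varphi(d)$ generators and every element generates a unique cyclic subgroup. Hence $\sum_{C\le G\ \text{cyclic}}|{\rm Aut}(C)|=|G|$, and therefore
\[
f(G)=1+\frac{1}{|G|}\sum_{H\le G\ \text{non-cyclic}}|{\rm Aut}(H)|.
\]
This is the engine of the whole proof (it also re-proves Theorem 1.1). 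Using the multiplicativity from the introduction, I write $G=\prod_p G_p$ as the product of its Sylow subgroups, so $f(G)=\prod_p f(G_p)$; since $f(G_p)=1$ exactly when $G_p$ is cyclic and $G$ is non-cyclic iff some $G_p$ is non-cyclic, it suffices to prove the following $p$-group statement: \emph{if $P$ is a non-cyclic abelian $p$-group, then $f(P)\ge\frac52$, with equality iff $p=2$ and $P\cong\mathbb{Z}_2\times\mathbb{Z}_2$.}

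Granting this lemma, the assembly is immediate: each non-cyclic Sylow factor is $\ge\frac52$ and each cyclic factor equals $1$, so $f(G)\ge\frac52$. Equality forces a single non-cyclic Sylow subgroup, necessarily $\cong\mathbb{Z}_2\times\mathbb{Z}_2$ with $f=\frac52$, while all remaining Sylow subgroups are cyclic and hence of odd order; this is exactly $G\cong(\mathbb{Z}_2\times\mathbb{Z}_2)\times\mathbb{Z}_n$ with $n$ odd, and conversely such $G$ gives $f(G)=\frac52\cdot1=\frac52$.

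For the $p$-group lemma I would induct on $|P|$. The base case is the unique minimal non-cyclic abelian $p$-group $P\cong\mathbb{Z}_p\times\mathbb{Z}_p$ (that this is the only one is classical: any non-cyclic abelian $p$-group of larger order contains a proper copy of $\mathbb{Z}_p\times\mathbb{Z}_p$ inside $\Omega_1(P)$); its only non-cyclic subgroup is itself, so $f(\mathbb{Z}_p\times\mathbb{Z}_p)=1+\frac{|{\rm GL}(2,p)|}{p^2}=1+\frac{(p-1)^2(p+1)}{p}$, which equals $\frac52$ when $p=2$ and exceeds $\frac52$ when $p\ge3$. For the inductive step, suppose $P\not\cong\mathbb{Z}_p\times\mathbb{Z}_p$. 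Then $P$ is not minimal non-cyclic, so it has a proper non-cyclic subgroup, and any maximal subgroup $K$ containing it is non-cyclic (a subgroup of a cyclic group would be cyclic). Since every subgroup of $K$ is a subgroup of $P$ while $P\not\le K$, the displayed identity together with $[P:K]=p$ gives
\[
f(P)\ge 1+\frac{f(K)-1}{p}+\frac{|{\rm Aut}(P)|}{|P|}.
\]
By the induction hypothesis $f(K)\ge\frac52$, whence $f(P)\ge 1+\frac{3}{2p}+\frac{|{\rm Aut}(P)|}{|P|}$.

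It then remains to control the automorphism ratio, and here I would invoke the Hillar--Rhea order formula for ${\rm Aut}$ of a finite abelian $p$-group to establish $\frac{|{\rm Aut}(P)|}{|P|}\ge 1$ for every non-cyclic abelian $2$-group (the extremal value $1$ occurring along $\mathbb{Z}_{2^a}\times\mathbb{Z}_2$) and $\frac{|{\rm Aut}(P)|}{|P|}>\frac32$ for every non-cyclic abelian $p$-group with $p\ge3$. Substituting these bounds yields $f(P)\ge 1+\frac34+1=\frac{11}{4}>\frac52$ in the case $p=2$, $P\not\cong\mathbb{Z}_2^2$, and $f(P)>1+0+\frac32=\frac52$ in the case $p\ge3$, which closes the induction and pins the equality case to $\mathbb{Z}_2\times\mathbb{Z}_2$ alone. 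The hard part will be exactly this last step: extracting clean, uniform lower bounds for $|{\rm Aut}(P)|/|P|$ from the Hillar--Rhea formula (whose expression in terms of the partition $e_1\ge\cdots\ge e_r$ defining $P$ is unwieldy) and confirming that the minimum over all non-cyclic abelian $p$-groups is attained only as claimed, so that $\mathbb{Z}_2\times\mathbb{Z}_2$ emerges as the unique extremal $p$-group.
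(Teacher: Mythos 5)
The proposal is correct and follows essentially the same route as the paper: multiplicativity reduces the claim to non-cyclic abelian $p$-groups, the identity $f(G)=1+\frac{1}{|G|}\sum_{H\ \mathrm{non\text{-}cyclic}}|{\rm Aut}(H)|$ drives an induction through a non-cyclic maximal subgroup, and a lower bound on $|{\rm Aut}(P)|/|P|$ from the explicit automorphism-count formula closes the argument and isolates $\mathbb{Z}_2\times\mathbb{Z}_2$ as the extremal case. The bound you defer to the Hillar--Rhea formula is precisely the paper's Lemma 2.1, namely $|{\rm Aut}(P)|\geq |P|(p-1)^2$ for every non-cyclic abelian $p$-group $P$, which yields both of your claimed estimates ($\geq 1$ for $p=2$ and $>\frac{3}{2}$ for $p\geq 3$) in one stroke.
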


Inspired by the above results, we came up with the following conjecture, which we have verified by computer for many classes of finite groups.

\begin{conjecture}
$2$ is the second smallest value of the function $f$.
\end{conjecture}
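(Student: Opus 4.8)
To prove the conjecture that $2$ is the second smallest value of $f$, the plan is first to recast $f$ so that the cyclic subgroups are eliminated. Every element of $G$ generates a unique cyclic subgroup, and a cyclic group of order $d$ has exactly $\varphi(d)=|\mathrm{Aut}(\mathbb{Z}_d)|$ generators, so summing over cyclic subgroups gives $\sum_{C\leq G,\ C\text{ cyclic}}|\mathrm{Aut}(C)|=|G|$. Writing $T(G)=\sum_{H\leq G,\ H\text{ non-cyclic}}|\mathrm{Aut}(H)|$ and splitting $S(G)$ accordingly, we obtain
\[
f(G)=1+\frac{T(G)}{|G|},
\]
which already recovers Theorem 1.1. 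Since $2$ is attained (for instance by the groups $\mathbb{Z}_3\rtimes\mathbb{Z}_{2^n}$ of Proposition 1.3, and, by multiplicativity, by any $S_3\times\mathbb{Z}_n$ with $\gcd(n,6)=1$), the conjecture becomes exactly the assertion that no non-cyclic group satisfies $1<f(G)<2$, i.e. the single inequality $T(G)\geq|G|$ for every finite non-cyclic group $G$.

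With this reduction I would separate the abelian and non-abelian cases. For abelian $G$ the inequality is Theorem 1.4, in fact with the stronger bound $5/2$, so only non-abelian non-cyclic groups remain; among those, Proposition 1.2 settles the centerless case, leaving the groups with nontrivial center as the genuine difficulty. For these I would attempt an induction on $|G|$ built on the classical classification of minimal non-cyclic groups already invoked in Proposition 1.3: every non-cyclic $G$ contains a minimal non-cyclic subgroup $M$, one of $\mathbb{Z}_p\times\mathbb{Z}_p$, $Q_8$, or a non-abelian $\mathbb{Z}_q\rtimes\mathbb{Z}_{p^m}$, and Proposition 1.3 gives $T(M)\geq|M|$. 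The goal would be to amplify this local bound, by running over the $G$-conjugates and the non-cyclic overgroups of $M$, into the global bound $T(G)\geq|G|$.

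The main obstacle is that $f$ is not monotone under subgroups or quotients, so $T(M)\geq|M|$ for a small $M$ gives no direct control when $|M|\ll|G|$. Concretely, the term $H=G$ alone contributes $|\mathrm{Aut}(G)|$ to $T(G)$, so one must show that the deficit $|G|-|\mathrm{Aut}(G)|$, which really can be positive (e.g. whenever $G$ has a cyclic direct factor of highly composite order), is always absorbed by the automorphism counts of the proper non-cyclic subgroups. I expect the extremal and hardest configurations to be the non-abelian analogues of $\mathbb{Z}_3\rtimes\mathbb{Z}_{2^n}$ and $S_3\times\mathbb{Z}_n$: groups with essentially one minimal non-cyclic subgroup but a large supply of cyclic subgroups, where $T(G)=|G|$ holds tightly and no slack is available. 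Devising a uniform covering-and-counting argument that copes with a non-normal $M$ and controls the overlaps among the non-cyclic subgroups it generates is, I believe, the crux, and the reason the statement remains only a conjecture.
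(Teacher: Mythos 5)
The statement you were asked about is not proved in the paper at all: it is stated explicitly as a conjecture, supported only by computer verification for many classes of groups, and the paper's Propositions 1.2, 1.3 and Theorem 1.4 are exactly the partial results you cite. So there is no ``paper proof'' to compare against, and your proposal, as you yourself acknowledge, is not a proof either. Your reductions are nevertheless correct and match the paper's framework precisely: the identity $\sum_{C\ \mathrm{cyclic}}|{\rm Aut}(C)|=|G|$ and the resulting formula $f(G)=1+T(G)/|G|$ are exactly what appears in the proofs of Theorem 1.1 and Proposition 1.2; the value $2$ is attained by $\mathbb{Z}_3\rtimes\mathbb{Z}_{2^n}$ (Proposition 1.3) and, by multiplicativity, by $S_3\times\mathbb{Z}_n$ with $\gcd(n,6)=1$; and the conjecture is indeed equivalent to $T(G)\geq|G|$ for every finite non-cyclic $G$. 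Your case split is also the right one: Theorem 1.4 handles abelian groups (with the stronger constant $5/2$), Proposition 1.2 handles centerless groups, and the open territory is exactly the non-abelian groups with nontrivial center.

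The genuine gap is the one you name: the proposed induction through minimal non-cyclic subgroups does not close, because $T(M)\geq|M|$ for a small subgroup $M$ gives no control over $T(G)$ when $|M|\ll|G|$, and the single term $|{\rm Aut}(G)|$ can fall well short of $|G|$ (the paper's references \cite{4,5,6} even exhibit groups with $|{\rm Aut}(G)|/|G|$ arbitrarily small and $Z(G)\neq 1$). No covering-and-counting argument over conjugates and overgroups of $M$ is supplied, and it is precisely this step that would constitute the mathematical content of a proof. In short: your analysis is an accurate and honest map of the problem, consistent with everything the paper establishes, but it proves nothing beyond the paper's partial results --- which is the expected outcome, since the statement remains an open conjecture.
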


For the proof of Theorem 1.4, we will need to know the number of auto\-morphisms of a finite abelian $p$-group. This has been explicitly computed e.g. in \cite{3,7,12}.

\begin{theorem}
Let $G\cong\prod_{i=1}^k \mathbb{Z}_{p^{n_i}}$ be a finite abelian $p$-group, where $1\leq n_1\leq n_2\leq...\leq n_k$. Then
\begin{equation}
|{\rm Aut}(G)|=\prod_{i=1}^k (p^{a_i}-p^{i-1})\prod_{u=1}^k p^{n_u(k-a_u)}\prod_{v=1}^k p^{(n_v-1)(k-b_v+1)}\,,
\end{equation}where
\begin{equation}
a_r=max\{s\mid n_s=n_r\} \mbox{ and } b_r=min\{s\mid n_s=n_r\}\,,\, r=1,2,...,k\,.\nonumber
\end{equation}In particular, for $k=2$, we have
\begin{equation}
|{\rm Aut}(G)|=(p-1)^2\left(p+1\right)^{\left[\frac{n_1}{n_2}\right]}p^{3n_1+n_2-\left[\frac{n_1}{n_2}\right]-2}.
\end{equation}
\end{theorem}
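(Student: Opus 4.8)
The plan is to realise $\mathrm{End}(G)$ concretely as a ring of $k\times k$ integer matrices with prescribed divisibility constraints, and then to count its units. Writing $e_1,\dots,e_k$ for the canonical generators of the factors $\mathbb{Z}_{p^{n_1}},\dots,\mathbb{Z}_{p^{n_k}}$, an endomorphism $\varphi$ is determined by $\varphi(e_j)=\sum_{i=1}^k a_{ij}e_i$. First I would record the well-definedness condition: since $p^{n_j}e_j=0$, one needs $p^{n_j}a_{ij}\equiv 0\pmod{p^{n_i}}$, equivalently $p^{\max(0,\,n_i-n_j)}\mid a_{ij}$, with $a_{ij}$ read modulo $p^{n_i}$. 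Thus each entry $a_{ij}$ ranges over a set of size $p^{\min(n_i,n_j)}$, so $|\mathrm{End}(G)|=p^{\sum_{i,j}\min(n_i,n_j)}$.

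The second step identifies the automorphisms. As $G$ is finite, $\varphi$ is an automorphism iff it is surjective, and by Nakayama's lemma this happens iff the induced map on $G/pG\cong\mathbb{F}_p^{\,k}$ is invertible. I would therefore reduce modulo $p$, obtaining $\bar A=(\bar a_{ij})$ in the $\mathbb{F}_p$-space $M$ of admissible matrices. The divisibility constraints force $\bar a_{ij}=0$ whenever $n_i>n_j$; since the $n_i$ are non-decreasing, grouping the indices by the common value of $n_i$ makes $\bar A$ block upper-triangular, with unconstrained diagonal blocks, free super-diagonal blocks, and vanishing sub-diagonal blocks.

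Now the count factors through the (additive, surjective) reduction homomorphism $\rho\colon\mathrm{End}(G)\to M$, whose fibres are cosets of $\ker\rho$ and hence all of equal size, giving $|\mathrm{Aut}(G)|=\#\{\text{invertible }\bar A\in M\}\cdot|\ker\rho|$. For the first factor, a block upper-triangular matrix is invertible iff each diagonal block is, so the invertible elements of $M$ number $\prod_s|\mathrm{GL}_{\mu_s}(\mathbb{F}_p)|$ times $p$ to the number of free super-diagonal entries, where $\mu_s$ runs over the multiplicities of the distinct values among the $n_i$. Extracting from the block factor the power $p^{(b_s-1)\mu_s}$ rewrites this count as exactly $\prod_{i=1}^k(p^{a_i}-p^{i-1})$, the first product in (3). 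For the second factor, $|\ker\rho|=|\mathrm{End}(G)|/|M|=p^{\sum_{i,j}\min(n_i,n_j)-\#\{(i,j):\,n_i\le n_j\}}$, using $|M|=p^{\#\{(i,j):\,n_i\le n_j\}}$.

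I expect the genuine obstacle to be purely bookkeeping: verifying that the exponent $\sum_{i,j}\min(n_i,n_j)-\#\{(i,j):\,n_i\le n_j\}$ equals $\sum_u n_u(k-a_u)+\sum_v(n_v-1)(k-b_v+1)$, which is what matches $|\ker\rho|$ with the last two products of (3); everything else is structural. Finally, for $k=2$ I would simply specialise, distinguishing $n_1=n_2$ from $n_1<n_2$. Observing that $[n_1/n_2]$ equals $1$ in the first case and $0$ in the second lets both subcases collapse into the single expression (4), which also serves as a reassuring check on the exponents in the general computation.
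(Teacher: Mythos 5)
The paper does not prove this theorem at all: it is imported as a known result, with the computation attributed to \cite{3,7,12}, so there is no internal proof to compare against. Your sketch is essentially the argument of Hillar and Rhea \cite{7} --- represent $\mathrm{End}(G)$ by integer matrices subject to $p^{\max(0,\,n_i-n_j)}\mid a_{ij}$ with $a_{ij}$ taken modulo $p^{n_i}$, detect automorphisms by invertibility of the reduction modulo $p$ (legitimate, since $pG$ is the Frattini subgroup), and count fibrewise over the surjection $\rho$ --- and every structural step is sound. The one step you defer as ``bookkeeping'' does close, and since it is the only place the specific exponents of (3) enter, it is worth recording why: using that the $n_i$ are non-decreasing, $k-a_u=\#\{j: n_j>n_u\}$ and $k-b_v+1=\#\{j: n_j\ge n_v\}$, so the claimed exponent of $|\ker\rho|$ is $\sum_u n_u\bigl(\#\{j:n_j>n_u\}+\#\{j:n_j\ge n_u\}\bigr)-\#\{(u,j):n_u\le n_j\}$; on the other hand $\sum_j\min(n_u,n_j)=n_u\,\#\{j:n_j\ge n_u\}+\sum_{j:\,n_j<n_u}n_j$, and summing the last term over $u$ and swapping the roles of the indices gives $\sum_j n_j\,\#\{u:n_u>n_j\}$, which reproduces exactly the first display; subtracting $|M|=p^{\#\{(i,j):n_i\le n_j\}}$ matches your $|\ker\rho|$. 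Similarly $\sum_s(b_s-1)\mu_s=\sum_{s'<s}\mu_{s'}\mu_s$ is precisely the number of free strictly-super-diagonal entries, so the invertible count in $M$ equals $\prod_{i=1}^k(p^{a_i}-p^{i-1})$ as you assert. The $k=2$ specialisation also checks out in both subcases ($[\,n_1/n_2\,]=1$ iff $n_1=n_2$), giving $(p-1)^2(p+1)p^{4n-3}$ and $(p-1)^2p^{3n_1+n_2-2}$ respectively. In short: correct, and it supplies a proof where the paper supplies only a citation.
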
\newpage

We end this paper by indicating a list of open problems concerning our previous results.

\bigskip\noindent{\bf Problem 1.} Determine all finite groups $G$ such that $f(G)=2$.

\bigskip\noindent{\bf Problem 2.} Find the minimum of $f$ on the class of finite non-cyclic $p$-groups.

\bigskip\noindent{\bf Problem 3.} Is ${\rm Im}(f)$ dense in the interval $[2,+\infty)$?
\smallskip

Since $f(D_{2n})=\frac{n+1}{2}$\,, for any odd integer $n\geq 3$, it follows that $\mathbb{N}^*\subseteq{\rm Im}(f)$.
Thus the function $f$ takes arbitrarily large values.
\smallskip

Most of our notation is standard and will usually not be repeated here. For basic notions and results on groups we refer the reader to \cite{8}.

\section{Proof of the main results}

First of all, we prove Theorem 1.1.

\bigskip\noindent{\bf Proof of Theorem 1.1.} Let $C(G)$ be the poset of cyclic subgroups of $G$. For every divisor $d$ of $|G|$, we denote by $n_d$ the number of cyclic subgroups of order $d$ of $G$ and by $n'_d$ the number of elements of order $d$ in $G$. Then we have
\begin{equation}
n'_d=n_d\varphi(d)\nonumber
\end{equation}because a cyclic subgroup of order $d$ contains $\varphi(d)$ elements of order $d$. One obtains
\begin{align*}
S(G)&=\sum_{H\leq G}|{\rm Aut}(H)|\geq\sum_{H\in C(G)}|{\rm Aut}(H)|=\sum_{H\in C(G)}\varphi(|H|)\\
&=\sum_{d\mid n}\!\sum_{H\in C(G),\, |H|=d}\!\!\!\varphi(d)=\sum_{d\mid n}n_d\varphi(d)=\sum_{d\mid n}n'_d=|G|\,,\nonumber
\end{align*}which shows that $f(G)\geq 1$. Moreover, we have equality if and only $C(G)=L(G)$, i.e. if and only if $G$ is cyclic, as desired.\qed
\bigskip

\bigskip\noindent{\bf Proof of Proposition 1.2.} By the proof of Theorem 1.1, it follows that
\begin{equation}
f(G)=1+\frac{1}{|G|}\sum_{H\notin C(G)}|{\rm Aut}(H)|\,,\nonumber
\end{equation}for any finite group $G$. If $G$ is non-cyclic, then we get
\begin{equation}
f(G)\geq 1+\frac{|{\rm Aut}(G)|}{|G|}\geq 1+\frac{|{\rm Inn}(G)|}{|G|}=1+\frac{1}{|Z(G)|}\,,\nonumber
\end{equation}as desired.\qed
\bigskip

Note that the ratio $r(G)=\frac{|{\rm Aut}(G)|}{|G|}$ is $\geq 1$ for many classes of finite groups $G$. However, there are examples of finite groups $G$ with $Z(G)\neq 1$, but of arbitrarily small $r(G)$ (see e.g. \cite{4,5,6}).

\bigskip\noindent{\bf Proof of Proposition 1.3.} By a classical result of Miller and Moreno \cite{11}, a finite minimal non-cyclic group is of one of the following types:
\begin{itemize}
\item[1)] $\mathbb{Z}_p\times\mathbb{Z}_p$, where $p$ is a prime;
\item[2)] $Q_8$;
\item[3)] $\langle a,b\mid a^p=b^{q^n}=1,\, b^{-1}ab=a^r\rangle$, where $p,q$ are distinct primes and $r\not\equiv 1\, ({\rm mod}\, p)$, $r^q\equiv 1\, ({\rm mod}\, p)$.
\end{itemize}For these groups we easily obtain:
\begin{itemize}
\item[1)] $f(\mathbb{Z}_p\times\mathbb{Z}_p)=1+\frac{(p+1)(p-1)^2}{p}>2$, for all primes $p$,
\item[2)] $f(Q_8)=4>2$,
\item[3)] $f(\langle a,b\mid a^p=b^{q^n}=1,\, b^{-1}ab=a^r\rangle)=1+\frac{p-1}{q}\geq 2$ because $q\mid p-1$.
\end{itemize}Moreover, we have $f(G)=2$ if and only if $G$ is of type 3) and $p=3$, $q=2$, i.e. if and only if $G\cong\mathbb{Z}_3\rtimes\mathbb{Z}_{2^n}$, $n\in\mathbb{N}^*$. This completes the proof.\qed
\bigskip

Before proving Theorem 1.4, we establish two auxiliary results.

\begin{lemma}
Let $G$ be a non-cyclic abelian $p$-group of order $p^n$. Then $|{\rm Aut}(G)|\geq p^n(p-1)^2$.
\end{lemma}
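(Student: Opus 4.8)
The plan is to invoke Theorem 1.6 and estimate the three factors of formula (3). Since $G$ is non-cyclic we may write $G\cong\prod_{i=1}^k\mathbb{Z}_{p^{n_i}}$ with $1\le n_1\le\cdots\le n_k$ and rank $k\ge 2$, and we set $n=\sum_{i=1}^k n_i$.

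First I would strip the prime-to-$p$ part from the first factor of (3). Because $a_i\ge i$ for every $i$, each term satisfies $p^{a_i}-p^{i-1}=p^{i-1}\bigl(p^{\,a_i-i+1}-1\bigr)\ge p^{i-1}\cdot p^{\,a_i-i}(p-1)=p^{\,a_i-1}(p-1)$, where I have used the elementary inequality $p^m-1\ge p^{m-1}(p-1)$ for $m\ge 1$, with equality iff $m=1$. Multiplying over $i$ and inserting the two pure $p$-power factors of (3), I obtain $|{\rm Aut}(G)|\ge (p-1)^k\,p^{E}$, where $E=\sum_{i=1}^k(a_i-1)+\sum_{u=1}^k n_u(k-a_u)+\sum_{v=1}^k(n_v-1)(k-b_v+1)$.

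The heart of the argument is then to show $E\ge n$. I would do this by establishing the exponent identity $\sum_{i=1}^k a_i+\sum_{u=1}^k n_u(k-a_u)+\sum_{v=1}^k(n_v-1)(k-b_v+1)=\sum_{i,j=1}^k\min(n_i,n_j)$, whose right-hand side is exactly the $p$-logarithm of $|{\rm End}(G)|=p^{\sum_{i,j}\min(n_i,n_j)}$; this is a finite bookkeeping computation organised by the blocks of equal exponents on which $a_r$ and $b_r$ are constant. Granting it, $E=\sum_{i,j}\min(n_i,n_j)-k$, and splitting the double sum into its diagonal $\sum_i n_i=n$ and its off-diagonal part $\sum_{i\ne j}\min(n_i,n_j)\ge 2\binom{k}{2}=k(k-1)$ gives $E\ge n+k(k-1)-k=n+k(k-2)\ge n$ for every $k\ge 2$.

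Finally I would conclude directly: $|{\rm Aut}(G)|\ge (p-1)^k p^{E}\ge (p-1)^k p^{n}\ge (p-1)^2 p^{n}=p^n(p-1)^2$, the last inequality using $(p-1)^{k-2}\ge 1$. The step I expect to be the main obstacle is the exponent identity of the previous paragraph; the two inequalities $p^m-1\ge p^{m-1}(p-1)$ and $\sum_{i\ne j}\min(n_i,n_j)\ge k(k-1)$ are routine. Tracking equality, I would expect it to force $k=2$ and $n_1=1$ (so that every block has size $1$ and $E=n$), i.e. $G\cong\mathbb{Z}_p\times\mathbb{Z}_{p^m}$ with $m\ge 2$, in agreement with formula (4).
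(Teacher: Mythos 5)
Your proof is correct, but it takes a genuinely different route from the paper's. The paper argues by cases: for $k=2$ it uses the closed formula (4) directly, bounding $(p+1)^{t}p^{-t}\geq 1$ and $3n_1+n_2-2\geq n_1+n_2$; for $k\geq 3$ it applies (3) with the crude bounds $a_i\geq i$, $b_i\leq i$, discards the middle product $\prod_u p^{n_u(k-a_u)}\geq 1$ altogether, and reduces to the elementary inequality $\sum_{i=1}^{k-1}n_i(k-i)\geq k$. Your argument is uniform in $k\geq 2$ and hinges on the exponent identity $\sum_i a_i+\sum_u n_u(k-a_u)+\sum_v(n_v-1)(k-b_v+1)=\sum_{i,j}\min(n_i,n_j)=\log_p|{\rm End}(G)|$, which is indeed true (it is implicit in the Hillar--Rhea derivation of (3), since ${\rm Hom}(\mathbb{Z}_{p^{n_i}},\mathbb{Z}_{p^{n_j}})$ has order $p^{\min(n_i,n_j)}$, and it checks out on small cases such as $(n_1,n_2)=(1,2)$, where both sides equal $5$); your factor-by-factor bound $p^{a_i}-p^{i-1}\geq p^{a_i-1}(p-1)$ and the off-diagonal estimate $\sum_{i\neq j}\min(n_i,n_j)\geq k(k-1)$ are both valid, yielding $E\geq n+k(k-2)\geq n$ and hence the lemma. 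What your approach buys is a single conceptual argument (the exponent is the size of the endomorphism ring minus $k$) together with the slightly sharper bound $(p-1)^kp^{\,n+k(k-2)}$ and a clean equality analysis; the cost is that the identity you yourself flag as the main obstacle is asserted rather than proved, and a complete write-up would have to include that block-by-block bookkeeping, whereas the paper's case split needs nothing beyond one-line inequalities.
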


\begin{proof}
Let $G\cong\prod_{i=1}^k \mathbb{Z}_{p^{n_i}}$, where $k\geq 2$ and $1\leq n_1\leq n_2\leq...\leq n_k$. For $k=2$, we have
\begin{align*}
|{\rm Aut}(G)|&=(p-1)^2\left(p+1\right)^{\left[\frac{n_1}{n_2}\right]}p^{3n_1+n_2-\left[\frac{n_1}{n_2}\right]-2}\\
&\geq (p-1)^2p^{3n_1+n_2-2}\\
&\geq (p-1)^2p^{n_1+n_2}\nonumber
\end{align*}by (4).\newpage

For $k\geq 3$, we will use the general formula (3). Observe that
\begin{equation}
i\leq a_i\leq k \mbox{ and } 1\leq b_i\leq i, \forall\, i=1,2,\dots,k.\nonumber
\end{equation}Then
\begin{align*}
|{\rm Aut}(G)|&=\prod_{i=1}^k (p^{a_i}-p^{i-1})\prod_{u=1}^k p^{n_u(k-a_u)}\prod_{v=1}^k p^{(n_v-1)(k-b_v+1)}\\
&\geq\prod_{i=1}^k (p^i-p^{i-1})\prod_{v=1}^k p^{(n_v-1)(k-v+1)}\\
&=(p-1)^kp^S,\nonumber
\end{align*}where
\begin{equation}
S=\sum_{i=1}^k\left[i-1+(n_i-1)(k-i+1)\right]=(n_1-1)k+\sum_{i=2}^k n_i(k-i+1).\nonumber
\end{equation}Since $(p-1)^k>(p-1)^2$, it suffices to show that
\begin{equation}
S\geq n_1+n_2+\dots+n_k,\nonumber
\end{equation}which is equivalent to
\begin{equation}
\sum_{i=1}^{k-1} n_i(k-i)\geq k.\nonumber
\end{equation}This is true because for $k\geq 3$ we have
\begin{equation}
\sum_{i=1}^{k-1} n_i(k-i)\geq\sum_{i=1}^{k-1}(k-i)=\frac{k(k-1)}{2}\geq k,\nonumber
\end{equation}completing the proof.
\end{proof}

\begin{corollary}
Let $G$ be a finite abelian group. Then $|{\rm Aut}(G)|\geq\varphi(|G|)$, and we have equality if and only if $G$ is cyclic.
\end{corollary}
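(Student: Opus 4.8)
The plan is to reduce the statement to abelian $p$-groups, where it follows at once from Lemma 2.2 together with the classical value of the automorphism count of a cyclic group. First I would write a finite abelian group $G$ as the direct product $G\cong\prod_p G_p$ of its Sylow subgroups, which have pairwise coprime orders. Since the automorphism group of a direct product of finite groups of coprime orders is the direct product of their automorphism groups, this gives $|{\rm Aut}(G)|=\prod_p|{\rm Aut}(G_p)|$. On the arithmetic side, the multiplicativity of the totient function yields $\varphi(|G|)=\prod_p\varphi(|G_p|)$. Thus it is enough to compare the two products factor by factor.

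Next I would fix a prime $p$ and treat the $p$-primary factor $G_p$, say of order $p^{m}$. If $G_p$ is cyclic, then $|{\rm Aut}(G_p)|=\varphi(p^{m})=\varphi(|G_p|)$, so the corresponding factors coincide. If $G_p$ is non-cyclic, then Lemma 2.2 gives $|{\rm Aut}(G_p)|\geq p^{m}(p-1)^2$; comparing with $\varphi(|G_p|)=p^{m-1}(p-1)$ and noting that the quotient of the two bounds is $p(p-1)\geq 2>1$, I obtain the strict inequality $|{\rm Aut}(G_p)|>\varphi(|G_p|)$.

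Multiplying the factorwise inequalities over all primes dividing $|G|$ then produces $|{\rm Aut}(G)|\geq\varphi(|G|)$. For the equality statement, since every factor is a positive integer bounded below by its counterpart, the two products agree if and only if they agree in each factor, i.e. if and only if every $G_p$ is cyclic; and $G$ is cyclic exactly when all of its Sylow subgroups are cyclic. This yields the desired characterization of the equality case.

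The argument carries essentially no difficulty, as all the real work is already done in Lemma 2.2 and in the multiplicativity of $|{\rm Aut}(\cdot)|$ across coprime factors. The only step needing slight care is the equality bookkeeping: one must observe that a single non-cyclic Sylow factor already forces strict inequality globally, which is immediate once all factors are known to be positive.
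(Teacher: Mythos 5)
Your proof is correct and follows essentially the same route as the paper: reduce to abelian $p$-groups via the Sylow decomposition and the multiplicativity of $|{\rm Aut}(\cdot)|$ and $\varphi$, then apply the lower bound $|{\rm Aut}(G_p)|\geq p^m(p-1)^2>p^{m-1}(p-1)=\varphi(|G_p|)$ from the lemma on non-cyclic abelian $p$-groups (Lemma 2.1 in the paper, not 2.2) together with $|{\rm Aut}(C)|=\varphi(|C|)$ for cyclic $C$. The paper compresses the reduction to one word (``obviously''), whereas you spell out the factorwise comparison and the equality bookkeeping; the content is the same.
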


\begin{proof}
Obviously, the result is reduced to finite abelian $p$-groups. If $G$ is a non-cyclic abelian $p$-group of order $p^n$, then Lemma 2.1 leads to
\begin{equation}
|{\rm Aut}(G)|\geq p^n(p-1)^2>p^{n-1}(p-1)=\varphi(|G|).\nonumber
\end{equation}The proof is completed by the well-known fact that $|{\rm Aut}(G)|=\varphi(|G|)$, for all finite cyclic groups $G$.
\end{proof}

We are now able to prove Theorem 1.4.\newpage

\bigskip\noindent{\bf Proof of Theorem 1.4.} Let $G\cong\prod_{i=1}^m G_i$ be the decomposition of $G$ as a direct product of abelian $p$-groups. Then
\begin{equation}
f(G)=\prod_{i=1}^m f(G_i).\nonumber
\end{equation}Since $G$ is not cyclic, at least one of the groups $G_i$, $i=1,2,\dots,m$, is not cyclic. On the other hand, we already know that $f(G_i)\geq 1$, for all $i$, by Theorem 1.1. These shows that it suffices to prove the inequality $f(G)\geq\frac{5}{2}$ for non-cyclic abelian $p$-groups.

Assume that $G\cong\prod_{i=1}^k \mathbb{Z}_{p^{n_i}}$ is an abelian $p$-group, where $k\geq 2$ and $1\leq n_1\leq n_2\leq...\leq n_k$. By induction on $n=n_1+n_2+\cdots+n_k$, we will prove that
\begin{equation}
f(G)\geq 1+\frac{(p+1)(p-1)^2}{p}\,.
\end{equation}For $n=2$, we have $G\cong\mathbb{Z}_p\times\mathbb{Z}_p$ and so
\begin{equation}
f(G)=1+\frac{|{\rm Aut}(G)|}{|G|}=1+\frac{|{\rm GL}(2,p)|}{p^2}=1+\frac{(p+1)(p-1)^2}{p}\,.\nonumber
\end{equation}Suppose now that $n\geq 3$ and that (5) holds for any non-cyclic abelian $p$-group of order $p^{n-1}$. Since $G$ is not cyclic, it possesses at least $p+1$ maximal subgroups $M_0,M_1,\dots,M_p$. Moreover, at least one of them, say $M_0$, is non-cyclic. Then Lemma 2.1 and Corollary 2.2 imply that
\begin{align*}
S(G)&\geq S(M_0)+|{\rm Aut}(G)|+\sum_{i=1}^p |{\rm Aut}(M_i)|\\
&\geq S(M_0)+p^n(p-1)^2+p\,\varphi(p^{n-1}).\nonumber
\end{align*}By using the inductive hypothesis, we get
\begin{align*}
f(G)&\geq\frac{1}{p}\,f(M_0)+(p-1)^2+\frac{p-1}{p}\\
&\geq\frac{1}{p}\left[1+\frac{(p+1)(p-1)^2}{p}\right]+(p-1)^2+\frac{p-1}{p}\\
&=1+\frac{(p^2+p+1)(p-1)^2}{p^2}\\
&>1+\frac{(p+1)(p-1)^2}{p}\,,\nonumber
\end{align*}as desired.\newpage

We can easily see that the minimum value of the right side of (5) is $\frac{5}{2}$\,, and it is attained for $p=2$. Hence we
have $f(G)\geq\frac{5}{2}$\,, with equality if and only if $p=2$ and $G\cong\mathbb{Z}_2\times\mathbb{Z}_2$. This completes the proof.\qed
\bigskip

\noindent{\bf Acknowledgments.} The authors are grateful to the reviewers for their remarks which improve the previous version of the paper.
\bigskip

\vspace*{3ex}
\small

\begin{minipage}[t]{7cm}
Georgiana Fasol\u a \\
Faculty of  Mathematics \\
"Al.I. Cuza" University \\
Ia\c si, Romania \\
e-mail: \!{\tt georgiana.fasola@student.uaic.ro}
\end{minipage}
\hfill\hspace{20mm}
\begin{minipage}[t]{5cm}
Marius T\u arn\u auceanu \\
Faculty of  Mathematics \\
"Al.I. Cuza" University \\
Ia\c si, Romania \\
e-mail: \!{\tt tarnauc@uaic.ro}
\end{minipage}


\begin{thebibliography}{10}
\bibitem{1} S.J. Baishya, {\it Revisiting the Leinster groups}, C.R. Math. Acad. Sci. Paris {\bf 352} (2014), 1-6.
\bibitem{2} S.J. Baishya and A.K. Das, {\it Harmonic numbers and finite groups}, Rend. Semin. Mat. Univ. Padova {\bf 132} (2014), 33-43.
\bibitem{3} J.N.S. Bidwell, M.J. Curran and D.J. McCaughan, {\it Automorphisms of direct products of finite groups}, Arch. Math. {\bf 86} (2006), 481-489.
\bibitem{4} J.N. Bray and R.A. Wilson, {\it On the orders of automorphism groups of finite groups}, Bull. London Math. Soc. {\bf 37} (2005), 381–385.
\bibitem{5} J.N. Bray and R.A. Wilson, {\it On the orders of automorphism groups of finite groups}, II, J. Group Theory {\bf 9} (2006), 537–547.
\bibitem{6} J. Gonz\'{a}lez-S\'{a}nchez and A. Jaikin-Zaipirain, {\it Finite $p$-groups with small automorphism group}, Forum Math. Sigma {\bf 3} (2015), e7.
\bibitem{7} C.J. Hillar and D.L. Rhea, {\it Automorphisms of finite abelian groups}, Amer. Math. Monthly {\bf 114} (2007), 917-923.
\bibitem{8} I.M. Isaacs, {\it Finite group theory}, Amer. Math. Soc., Providence, R.I., 2008.
\bibitem{9} T. De Medts and M. T\u arn\u auceanu, {\it Finite groups determined by an ine\-quality of the orders of their subgroups}, Bull. Belg. Math. Soc. Simon Stevin {\bf 15} (2008), 699-704.
\bibitem{10} T. De Medts and A. Mar\'{o}ti, {\it Perfect numbers and finite groups}, Rend. Semin. Mat. Univ. Padova {\bf 129} (2013), 17-33.
\bibitem{11} G.A. Miller and H.C. Moreno, {\it Non-abelian groups in which every subgroup is abelian}, Trans. Amer. Math. Soc. {\bf 4} (1903), 398-404.
\bibitem{12} A. Sehgal, S. Sehgal and P.K. Sharma, {\it The number of automorphism of a finite abelian group of rank two}, J. Discrete Math. Sci. Cryptogr. {\bf 19} (2016), 163-171.
\bibitem{13} M. T\u arn\u auceanu, {\it A generalization of the Euler's totient function}, Asian-Eur. J. Math. {\bf 8} (2015), no. 4, article ID 1550087.
\bibitem{14} M. T\u arn\u auceanu, {\it Finite groups determined by an inequality of the orders of their subgroups} II, Comm. Algebra {\bf 45} (2017), 4865-4868.
\end{thebibliography}
\end{document}